\documentclass[10pt]{amsart}

\usepackage{lmodern,graphicx}
\usepackage{amssymb}
\usepackage{amsmath}
\usepackage{amsfonts}
\usepackage{amsthm}
\usepackage[all,import]{xy}
\usepackage{url}
\setcounter{tocdepth}{1}
\urlstyle{leo}

\numberwithin{equation}{section}
\usepackage[colorlinks=true, pdfstartview=FitV, linkcolor=black, 
            citecolor=black, urlcolor=black]{hyperref}
\usepackage{bookmark}
\theoremstyle{definition}
\newtheorem{theorem}{Theorem}[]

\newtheorem{lemma}[theorem]{Lemma}

\newtheorem{corollary}[theorem]{Corollary}

\newtheorem{defn}[theorem]{Definition}

\newtheorem{remark}[theorem]{Remark}


\theoremstyle{remark}

\newcommand{\nc}{\newcommand}
\nc{\DMO}{\DeclareMathOperator}	

\nc{\newnotation}{\nomenclature}
\nc{\Cob}{\mathsf{Cob}}
\nc{\fat}{\mathsf{fat}}
\nc{\cob}{\mathsf{Cob}}
\nc{\sets}{\mathsf{Sets}}
\nc{\symp}{\mathsf{Symp}}
\nc{\ssets}{\mathsf{sSets}}
\nc{\cmpct}{\mathsf{cmpct}}
\nc{\pwrap}{\mathsf{PWrap}}
\nc{\coder}{\mathsf{Coder}}
\nc{\grmod}{\mathsf{GrMod}}
\nc{\spaces}{\mathsf{Spaces}}
\nc{\pwrms}{\mathsf{PWrFuk}_{M,S}}
\nc{\fuk}{\mathsf{Fukaya}}
\nc{\fukaya}{\mathsf{Fukaya}}
\nc{\fukml}{\mathsf{Fukaya}_{M,\Lambda}}
\nc{\fukmle}{\mathsf{Fukaya}_{M,\Lambda,\epsilon}}
\nc{\lag}{\mathsf{Lag}}
\nc{\lagml}{\lag_{M,\Lambda}} 
\nc{\lagmle}{\lag_{M,\Lambda,\epsilon}}
\nc{\fun}{\mathsf{Fun}}
\nc{\vect}{\mathsf{Vect}}
\nc{\chain}{\mathsf{Chain}}
\nc{\wrfuk}{\mathsf{WrFukaya}}
\nc{\pwrfuk}{\mathsf{PWrFukaya}}
\nc{\inffuk}{\mathsf{InfFuk}}
\nc{\inftycat}{\infty\mathsf{Cat}}
\nc{\corres}{\mathsf{Corres}}
\nc{\cat}{\mathsf{Cat}}
\nc{\fukep}{\fukaya_\Lambda(M,\epsilon)}
\nc{\fukepop}{\fukaya_\Lambda(M,\epsilon)^{\op}}
\nc{\lagep}{\lag_\Lambda(M,\epsilon)}
\DMO{\cyl}{cyl} 
\nc{\dbcoh}{D^b\mathsf{Coh}}
\nc{\corr}{\mathsf{Corr}}

\DMO{\conf}{Conf}
\DMO{\chains}{Chains}
\DMO{\cochains}{Cochains}
\DMO{\cone}{Cone}
\DMO{\ran}{Ran}
\DMO{\leg}{Leg}
\DMO{\cube}{Cube}
\DMO{\floer}{Floer}
\DMO{\maps}{Maps}
\DMO{\exact}{exact}
\DMO{\Decomp}{Decomp}
\DMO{\decomp}{Decomp}
\DMO{\yoneda}{Yoneda}
\DMO{\holomaps}{Holomaps}
\DMO{\comp}{Comp}
\DMO{\crit}{Crit}
\DMO{\test}{{test}}
\DMO{\sign}{sign}
\DMO{\topp}{top}
\DMO{\indx}{Index}
\DMO{\Break}{Break} 
\DMO{\zero}{zero} 
\DMO{\ob}{Ob}
\DMO{\gr}{Gr} 
\DMO{\Gr}{Gr} 
\DMO{\cl}{Cl} 
\DMO{\grlag}{GrLag}
\DMO{\Pin}{Pin}
\DMO{\Graph}{Graph}
\DMO{\pin}{Pin}
\DMO{\gap}{Gap}
\DMO{\Ex}{Ex}
\DMO{\id}{id}
\DMO{\End}{End}
\DMO{\sym}{Sym} 
\DMO{\aut}{Aut}
\DMO{\DK}{DK} 
\DMO{\poly}{poly} 
\DMO{\diff}{Diff} 
\DMO{\dist}{dist} 
\DMO{\coker}{coker} 
\nc{\kernel}{\ker} 
\DMO{\sspan}{span}
\DMO{\hocolim}{hocolim}	
\DMO{\holim}{holim}
\DMO{\sk}{sk}

\nc{\xto}{\xrightarrow}
\nc{\xra}{\xto}
\nc{\tensor}{\otimes}
\nc{\del}{\partial}
\nc{\delbar}{\overline{\del}}
\nc{\dd}{\diamond}
\nc{\tri}{\triangle}
\nc{\bb}{\Box}
\nc{\into}{\hookrightarrow}
\nc{\contains}{\supset}

\nc{\trbar}{\overline{T^*\RR}}
\nc{\tsa}{Ts\cA}
\nc{\tsb}{Ts\cB}

\nc{\vece}{ {\vec \epsilon}}	
\nc{\vecd}{ {\vec \delta}}
\nc{\vt}{ {\vec t}}
\nc{\vx}{ {\vec x}}
\nc{\vs}{ {\vec s}}

\DMO{\op}{op}

\nc{\hiro}{\textcolor{blue}}

\nc{\eqn}{\begin{equation}}
\nc{\eqnd}{\end{equation}}

\def\cA{\mathcal A}\def\cB{\mathcal B}

\def\RR{\mathbb R}

\def\ZZ{\mathbb Z}

\begin{document}
\title{In simply connected cotangent bundles, exact Lagrangian cobordisms are h-cobordisms}

\author{Hiro Lee Tanaka}
\address{One Oxford Street, Cambridge, MA, 02138}
\email{hirolee@math.harvard.edu}
\keywords{Cobordisms, Fukaya Categories, Mirror Symmetry}

\date{\today}

\begin{abstract}
We show that if Q is simply connected, then every exact Lagrangian cobordism between compact, exact Lagrangians in the cotangent bundle of Q is an h-cobordism. The result follows as a corollary of the Abouzaid-Kragh theorem.
\end{abstract}

\maketitle


\section{Introduction}
We prove:

\begin{theorem}\label{main-theorem}
Let $Q$ be simply connected and smooth. Any compact exact Lagrangian cobordism between compact, exact Lagrangians in $T^*Q$ is an $h$-cobordism.
\end{theorem}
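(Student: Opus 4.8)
The plan is to reduce everything to the Abouzaid--Kragh theorem by closing the cobordism up into a single \emph{closed} exact Lagrangian, and then to extract the homotopical conclusion by standard algebraic topology. Write $n = \dim Q$, so that each end $L_i$ ($i=0,1$) is an $n$-dimensional closed exact Lagrangian in $T^*Q$, while $W$ is an $(n+1)$-dimensional compact cobordism with $\partial W = L_0 \sqcup L_1$, arising (via the symplectic identification $\CC \times T^*Q \cong T^*(\RR \times Q)$) as an exact Lagrangian with cylindrical ends. First I would dispose of the ends: applying Abouzaid--Kragh to $L_0$ and to $L_1$ shows that the projections $L_i \to Q$ are homotopy equivalences, so each $L_i$ is simply connected (since $Q$ is) with $H_*(L_i;\ZZ) \cong H_*(Q;\ZZ)$. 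The goal is thus to prove that the two inclusions $L_0 \hookrightarrow W$ and $L_1 \hookrightarrow W$ are homotopy equivalences.

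The key geometric step is to form the double $\hat W = W \cup_{L_0 \sqcup L_1} \overline{W}$, where $\overline{W}$ is the reflection of $W$ under $(t,\tau)\mapsto(-t,\tau)$ of the $\CC = T^*\RR$ factor. Bending the cylindrical ends around (the standard ``turning'' of a Lagrangian cobordism) makes $\hat W$ a smooth closed Lagrangian in $T^*(S^1 \times Q)$, the circle arising by closing up the $\RR$-coordinate. Because $\overline{W}$ is the reflection of $W$, the Liouville periods over the resulting loop cancel in pairs, so $\hat W$ should be exact. Abouzaid--Kragh then applies to the closed base $S^1 \times Q$ and gives that $\hat W \to S^1 \times Q$ is a homotopy equivalence; in particular $\pi_1(\hat W) \cong \ZZ$ and $H_*(\hat W;\ZZ) \cong H_*(S^1 \times Q;\ZZ)$. \emph{I expect this step --- producing a closed exact Lagrangian from the cobordism and verifying the hypotheses of Abouzaid--Kragh, especially the smoothness and exactness of the turned double --- to be the main obstacle;} everything afterward is formal.

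It remains to feed this into algebraic topology, using that $W$, $L_0$ and $L_1$ are all simply connected. For the fundamental group: van Kampen applied to $\hat W = W \cup_{L_0 \sqcup L_1} W$, with each $L_i$ simply connected, gives $\pi_1(\hat W) \cong \pi_1(W) * \pi_1(W) * \ZZ$, the extra free generator coming from the two gluing components; comparing with $\pi_1(\hat W)\cong\ZZ$ and invoking additivity of rank under free products (Grushko) forces $\pi_1(W)=1$. For homology I would combine the Mayer--Vietoris sequence of the double --- into which the known groups $H_*(\hat W;\ZZ)\cong H_*(S^1\times Q;\ZZ)$ and $H_*(\partial W;\ZZ)\cong H_*(Q;\ZZ)^{\oplus 2}$ enter --- with Poincaré--Lefschetz duality $H_k(W,L_0)\cong H^{n+1-k}(W,L_1)$ for the pair $(W,\partial W)$. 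Duality treats the two ends symmetrically, so it suffices to establish $H_*(W,L_0;\ZZ)=0$, which the Mayer--Vietoris computation supplies; the reflection $\sigma$ swapping the two copies of $W$ and fixing $\partial W$ gives the intuition, acting by $-1$ on the $H_1(S^1)$ summand and trivially on $H_*(Q)$.

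Finally, $W$, $L_0$ and $L_1$ are all simply connected and the inclusions induce isomorphisms on integral homology, so by the relative Hurewicz theorem together with Whitehead's theorem each inclusion $L_i \hookrightarrow W$ is a homotopy equivalence. Hence $W$ is an $h$-cobordism, as claimed.
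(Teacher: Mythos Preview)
Your proposal is correct and follows essentially the same strategy as the paper: double the cobordism, close it up to a compact exact Lagrangian in $T^*(S^1\times Q)$, apply Abouzaid--Kragh, and then extract the $h$-cobordism conclusion by van Kampen plus a homology argument and Whitehead/Hurewicz. The only organizational difference is that the paper splits the algebraic topology into two lemmas---first showing the ``half-double'' $N = W\circ W^{\op}$ is an $h$-cobordism from $L_1$ to itself (using the pushout comparing $\overline{N}$ to $Q\times S^1$), then deducing that $W$ is an $h$-cobordism via the splitting of Mayer--Vietoris by the inclusion $L_1\hookrightarrow W$---whereas you pass directly from the closed double $\hat W$ to $W$ using Grushko and Poincar\'e--Lefschetz duality; both extractions are standard and neither is materially simpler than the other.
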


By the $h$-cobordism theorem~\cite{smale-h-cobordism,milnor-h-cobordism}, we have:

\begin{corollary}
If $Q$ is smooth, simply connected, and has $\dim Q \geq 5$, any two compact, exact Lagrangians related by an exact compact cobordism are diffeomorphic.
\end{corollary}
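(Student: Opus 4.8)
The plan is to obtain the corollary as a formal consequence of Theorem~\ref{main-theorem} together with the (high-dimensional, smooth) $h$-cobordism theorem. Fix a compact exact Lagrangian cobordism $W$ from $L_0$ to $L_1$ in $\CC \times T^*Q \cong T^*(\RR \times Q)$. Since $W$ is cylindrical outside a compact set --- a union of half-infinite cylinders over $L_0$ and over $L_1$ --- I would first truncate these ends to produce a compact smooth cobordism $\overline{W}$ with $\partial\overline{W} = L_0 \sqcup L_1$. Theorem~\ref{main-theorem} then tells us that $\overline{W}$ is an $h$-cobordism: the inclusions $L_0 \hookrightarrow \overline{W}$ and $L_1 \hookrightarrow \overline{W}$ are homotopy equivalences.

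It remains to check the hypotheses of the $h$-cobordism theorem. As $L_0, L_1$ are Lagrangian in $T^*Q$ they have dimension $n := \dim Q$, so $\overline{W}$ has dimension $n+1$; the assumption $\dim Q \geq 5$ yields $\dim \overline{W} \geq 6$. For simple connectivity, the Abouzaid--Kragh theorem makes the base projection $L_i \to Q$ a homotopy equivalence, so $\pi_1(L_i) \cong \pi_1(Q) = 1$; composing with the homotopy equivalence $L_i \hookrightarrow \overline{W}$ supplied by Theorem~\ref{main-theorem} gives $\pi_1(\overline{W}) = 1$. With these verified, the $h$-cobordism theorem furnishes a diffeomorphism $\overline{W} \cong L_0 \times [0,1]$ that is the identity on $L_0 \times \{0\}$; restricting to the opposite end $L_0 \times \{1\}$ exhibits the desired diffeomorphism $L_0 \cong L_1$.

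The deduction itself is bookkeeping; the real weight sits in its inputs. The key obstacle is supplying the simple-connectivity and homotopy-equivalence statements, both of which ultimately rest on the Abouzaid--Kragh theorem that a closed exact Lagrangian in $T^*Q$ projects by a homotopy equivalence onto $Q$ (and, through it, on Theorem~\ref{main-theorem} itself). A secondary point to make rigorous is the truncation step: one must ensure that the compact exact cobordism yields a genuine smooth $h$-cobordism with boundary precisely $L_0 \sqcup L_1$, so that Smale's theorem applies without modification.
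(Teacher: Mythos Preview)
Your proposal is correct and follows exactly the paper's approach: the corollary is stated immediately after Theorem~\ref{main-theorem} with only the sentence ``By the $h$-cobordism theorem'' as justification, and you have simply spelled out the routine verification of the dimension and simple-connectivity hypotheses needed to invoke Smale's theorem. The truncation remark and the appeal to Abouzaid--Kragh for $\pi_1(L_i)=1$ are the natural details one would supply, and they match the paper's implicit reasoning.
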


This result is motivated by two topics of interest. The first is the Nearby Lagrangian Conjecture of Arnol'd~\cite{arnold-first-steps}, which conjectures that if $Q$ is a smooth, compact manifold (not necessarily simply connected), then any compact, exact Lagrangian in $T^*Q$ is Hamiltonian-isotopic to the zero section. Theorem~\ref{main-theorem} shows that the classification of Lagrangian cobordism classes in $T^*Q$ provides another strategy for attacking the conjecture when $\dim Q \geq 5$ and $\pi_1 Q = *$. Namely, if there is only one exact, compact cobordism class, then every compact exact Lagrangian is cobordant to the zero section by a cylinder. An unknotting theorem for cylindrical cobordisms in $T^*(Q \times \RR)$ would exhibit the cobordism as a Hamiltonian isotopy.

The second is recent work on categories of Lagrangian cobordisms, as developed in~\cite{nadler-tanaka} and~\cite{biran-cornea,biran-cornea-2}. For a fixed symplectic manifold $M$ satisfying certain monotonicity or convexity conditions, one can define a category whose objects are Lagrangians (equipped with standard Floer-theoretic decorations) and whose morphisms are Lagrangian cobordisms between them (equipped with compatible decorations). In the setting of~\cite{nadler-tanaka}, all objects and cobordisms are {\em exact}.
The theorem shows one motivation for considering {\em non-compact} cobordisms in this category. (Otherwise, every morphism is homotopically uninteresting.)


Finally, there is another way in which Theorem~\ref{main-theorem} touches both topics. Recall:

\begin{theorem}[Fukaya-Nadler-Seidel-Smith~\cite{nadler,fukaya-seidel-smith}]
Let $Q$ be smooth, compact, and simply connected.
Any compact exact Lagrangian inside $T^*Q$ is equivalent to $Q$ in the Fukaya category of $T^*Q$, with $\ZZ/2\ZZ$ coefficients. If both $Q$ and the Lagrangian are Spin, then the same is true over arbitrary coefficients.
\end{theorem}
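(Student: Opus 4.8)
\noindent The plan is to reduce this geometric statement to an algebraic comparison of modules, and then to use simple connectivity to force the relevant module to be trivial. Write $F = T^*_q Q$ for a cotangent fiber. The first input I would invoke is Abouzaid's generation criterion: $F$ split-generates the wrapped Fukaya category $\cW(T^*Q)$, and its endomorphism algebra $A := CW^*(F,F)$ is quasi-isomorphic, by the theorem of Abbondandolo--Schwarz and Abouzaid, to the chains on the based loop space $C_{-*}(\Omega_q Q)$, an augmented DGA whose augmentation $\epsilon \colon A \to \mathbb{k}$ records the constant loop. The Yoneda-type functor $\cY \colon L \mapsto CW^*(F,L)$ then lands in perfect $A$-modules, and on the thick subcategory split-generated by $F$ (which contains every compact exact Lagrangian, again by the generation criterion) it is fully faithful. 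Since $F$ meets the zero section transversally in a single point, I get $\cY(Q) \simeq \mathbb{k}_\epsilon$, the augmentation module. So the whole theorem reduces to proving that $\cY(L) \simeq \mathbb{k}_\epsilon$ for every compact exact Lagrangian $L$.

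\noindent Next I would exploit simple connectivity at the algebraic level. Because $Q$ is simply connected, $\Omega_q Q$ is connected, so $A$ is connective with $H_0(A) = \mathbb{k}$; in particular $A$ is Koszul dual to $C^*(Q)$, and perfect $A$-modules are controlled by the fiber functor (derived base change $\cY(L) \otimes_A \mathbb{k}$) together with a rank count. First I would establish that $\cY(L)$ has \emph{rank one}: the exactness of $L$ rules out disc bubbling, so the Floer theory is well defined and deformation-invariant, and the self-Floer cohomology satisfies $HF^*(L,L) \cong H^*(L)$; comparing this computation with the module structure should pin the generic rank of $\cY(L)$ at one.

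\noindent The hard part will be upgrading this numerical rank-one statement to an honest quasi-isomorphism $\cY(L) \simeq \mathbb{k}_\epsilon$. Geometrically this is the assertion that the projection $\pi \colon L \to Q$ is a homotopy equivalence and that the local system carried by the brane structure on $L$ is trivial. I would prove the former by showing, via the Floer-theoretic module and the identification of $HF^*$ with $\mathrm{Ext}_A$ furnished by the generation criterion, that $\pi$ induces an isomorphism on homology, and then invoking the Whitehead theorem; simple connectivity of $Q$ is essential here, both to control $\pi_1(L)$ so that a homology isomorphism becomes a homotopy equivalence and to kill the monodromy of the local system. I expect this step — turning the rank count into a module equivalence while controlling the fundamental group — to be the main obstacle, and it is precisely the content of the arguments of Abouzaid and of Fukaya--Seidel--Smith. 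An alternative route I would keep in reserve is Nadler's microlocalization equivalence between $\cW(T^*Q)$ and constructible sheaves on $Q$: under it $L$ corresponds to a sheaf $\cF$ whose singular support is the smooth Lagrangian $L$, and simple connectivity should force $\cF$ to be a shift of the constant sheaf, i.e. the zero section.

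\noindent Finally I would address coefficients. Over $\ZZ/2$ none of the moduli spaces require orientations, so the argument above runs directly. Over an arbitrary coefficient ring one must coherently orient the moduli spaces of holomorphic discs defining both the $A_\infty$-structure on $A$ and the module maps; relative Pin/Spin (brane) structures supply these orientations, and when $Q$ and $L$ are both Spin the structures can be chosen compatibly, so the signs are consistent and the $\ZZ/2$ argument upgrades essentially verbatim to arbitrary coefficients.
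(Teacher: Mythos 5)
You should know at the outset that this theorem is not proved in the paper at all: it is stated as a recollection, with the proof outsourced to the cited works of Nadler and of Fukaya--Seidel--Smith, and it is used in the introduction only as motivation for Theorem~\ref{main-theorem}. So there is no in-paper argument to compare yours against; I can only measure your sketch against the cited literature. At that level, your outline is a fair reconstruction of the \emph{Abouzaid-style} route: generation of the wrapped category by a cotangent fiber $F$, the Abbondandolo--Schwarz/Abouzaid identification $CW^*(F,F) \simeq C_{-*}(\Omega_q Q)$, and reduction to showing $\cY(L) = CW^*(F,L)$ is the augmentation module, with Nadler's microlocalization held in reserve. Note this is genuinely different from the original Fukaya--Seidel--Smith argument, which instead studies the family of Floer groups $HF^*(T^*_q Q, L)$ as $q$ varies over $Q$ and runs a spectral sequence to show this graded local system is of rank one and concentrated in a single degree.

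As a standalone proof, however, your writeup has a genuine gap exactly where you flag one, and deferring it to ``the content of the arguments of Abouzaid and of Fukaya--Seidel--Smith'' makes the proposal circular rather than a proof. Concretely: the claim that exactness plus $HF^*(L,L) \cong H^*(L)$ ``pins the generic rank of $\cY(L)$ at one'' is not a soft comparison --- a priori $CW^*(F,L)$ only sees the intersection of $L$ with a fiber, and nothing elementary bounds the rank of its cohomology; establishing rank one is the entire technical core of the cited papers. Two further points need repair. First, gradings: the statement as quoted imposes no Maslov-class hypothesis, yet Fukaya--Seidel--Smith and Nadler work with graded branes, so you must either prove the Maslov class of a compact exact $L \subset T^*Q$ vanishes (itself a nontrivial input, due to later work) or build that hypothesis into the statement. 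Second, even granting rank one, the identification $\cY(Q) \simeq \Bbbk_\epsilon$ requires verifying the full $A$-module structure on the one-dimensional complex $CW^*(F,Q)$, not merely counting the single intersection point; only with that in hand does the Koszul-duality/fiber-functor mechanism in the simply connected case close the argument. Your final paragraph on $\ZZ/2\ZZ$ versus Spin coefficients is correct in outline and matches the standard treatment of orientations of the moduli spaces.
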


It can be shown that compact, exact Lagrangians related by a compact, exact cobordism are equivalent in the Fukaya category~\cite{hiro-thesis, hiro-functor}. \footnote{Over arbitrary coefficients, one needs each Lagrangian to have a Spin structure, and for the cobordism to respect this structure. In characteristic 2, no such structure is needed.} 
Hence the equivalence class of an object in $\fuk(T^*Q)$ is an invariant of an exact Lagrangian's cobordism class. If this is a complete invariant (in the way Stiefel-Whitney numbers classify unoriented cobordism classes) the above theorem of Fukaya-Nadler-Seidel-Smith, together with Theorem~\ref{main-theorem}, would show that any two exact, compact Lagrangians are $h$-cobordant (to the zero section).

\begin{remark}
Finally, we are told that Lara-Simone Suarez has a result for non-simply-connected cobordisms~\cite{suarez}---specifically, that exact, spin, Lagrangian cobordisms for which the collar inclusions induce isomorphisms on $\pi_{1}$ are diffeomorphic to cylinders. Instead of relying on Abouzaid-Kragh's theorem, she utilizes a previous result of Biran and Cornea~\cite{biran-cornea}. This also allows her to consider symplectic manifolds that need not be cotangent bundles. \end{remark}

\subsection{Acknowledgements} We are grateful to Tim Perutz for helpful feedback on this paper. The author was supported by 
a Presidential Fellowship from Northwestern University's Office of the President, 
an NSF Graduate Research Fellowship, 
and a Mathematical Sciences Research Institute Postdoctoral Fellowship.

\section{Recollections}
Recall that $T^*Q$ has a 1-form $\theta_Q = \sum_i p_i dq_i$ whose derivative is symplectic. 

\begin{defn}
We say a Lagrangian submanifold $L \subset T^*Q$ is {\em exact} if it is equipped with a smooth function $f_L: L \to \RR$ for which $df_L = \theta|_L$. We call $f_L$ a primitive for $L$.
\end{defn}

\begin{defn}
A Lagrangian submanifold $W \subset T^*Q \times T^*(0,1)$ is said to be a {\em Lagrangian cobordism} from $L_0$ to $L_1$ if
\[
 W|_{(0,\epsilon)} = L_0 \times (0,\epsilon)
 \qquad
 \text{and}
 \qquad
 W|_{(1-\epsilon,1)} = L_1 \times (1-\epsilon,1) \subset M \times T^*(0,1).
\]
Fixing primitives $f_{L_i}$, we say $W$ is {\em exact} if one can choose a function $f_W: W \to \RR$ so that 
\[
 f_W|_{L_0\times (0,\epsilon)} = f_{L_0}
 \qquad
 f_W|_{L_1 \times (1-\epsilon, 1)} = f_{L_1},
 \qquad
 df_W = (\theta_Q +\theta_{\RR})|_W.
\]
In particular, the value of $f$ is independent of $t$ when $t \in (0,\epsilon) \bigcup (1-\epsilon,1).$
\end{defn}

Finally, recall the following:

\begin{theorem}[Abouzaid-Kragh~\cite{abouzaid-nearby,abouzaid-kragh}]
If $L \subset T^*Q$ is a compact, exact Lagrangian, then the projection map $L \to Q$ is a homotopy equivalence.
\end{theorem}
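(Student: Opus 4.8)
The plan is to show that $\pi|_L : L \to Q$ is a weak homotopy equivalence and then invoke Whitehead's theorem (both spaces have the homotopy type of finite CW complexes). Since a homology isomorphism upgrades to a homotopy equivalence only in the presence of control over the fundamental group and local systems, I would split the argument into three parts: (1) $\pi$ induces an isomorphism on integral homology; (2) $\pi$ induces an isomorphism on $\pi_1$; and (3) these combine, via homology with local coefficients on the universal covers, into a genuine weak equivalence. The homological input in (1) is where symplectic topology enters; parts (2)--(3) are where one pays for not assuming $L$ simply connected.

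For (1), the engine is the wrapped Fukaya category $\wrfuk(T^*Q)$. First I would invoke Abouzaid's generation theorem: a single cotangent fiber $T^*_q Q$ split-generates $\wrfuk(T^*Q)$, and by the Abbondandolo--Schwarz--Abouzaid identification its endomorphism algebra is quasi-isomorphic, as an $A_\infty$-algebra, to the chains $C_{-*}(\Omega_q Q)$ on the based loop space (Pontryagin product matching the pair-of-pants product). The compact exact $L$ then defines a module over this algebra through the Floer complex $CF^*(T^*_q Q, L)$, with exactness guaranteeing the complex is well defined and invariant. The heart of the matter is to identify this module: one shows $CF^*(T^*_q Q, L)$ is quasi-isomorphic to a rank-one (augmentation) module, so that comparing the two $C_{-*}(\Omega_q Q)$-module structures forces $\pi$ to be a homology equivalence. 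Concretely, degree considerations first pin the algebraic intersection number of $L$ with a fiber to $\pm 1$, after which the module computation bootstraps, via a Serre spectral sequence comparison for $\Omega_q Q \to PQ \to Q$ against the analogous data for $L$, to an isomorphism $H_*(L) \cong H_*(Q)$.

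For (2), I would show $\pi_* : \pi_1 L \to \pi_1 Q$ is an isomorphism: surjectivity follows from the degree-one statement, while injectivity requires lifting the Floer-theoretic comparison to covering spaces of $Q$, or equivalently working with $C_{-*}(\Omega_q Q)$ at the chain level, which already records $\pi_1$, so that the module structure detects the fundamental group. Combining this with (1) and repeating the homology computation with arbitrary local coefficients (equivalently over $C_{-*}(\Omega Q)$ rather than just its homology) yields a homology isomorphism on universal covers, and Whitehead's theorem then completes the proof.

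The main obstacle is that the graded and module-theoretic arguments above tacitly require the \emph{Maslov class} of $L$ to vanish: without it the Floer complexes are only $\ZZ/2$-graded, the degree bookkeeping that forces the intersection number to be $\pm 1$ breaks down, and the identification of $\End(T^*_q Q)$ with $C_{-*}(\Omega_q Q)$ loses its grading. Abouzaid's earlier argument assumes this vanishing outright; removing the hypothesis is exactly Kragh's contribution and is the genuinely hard step. I would expect to establish it not through ordinary Floer \emph{homology} but through a stable-homotopy refinement: build the wrapped Floer \emph{spectrum} of the fiber, exhibit it as a module over the ring spectrum $\Sigma^\infty_+ \Omega Q$, and use the resulting parametrized/twisted structure, together with the fact that the bundle of spectra over $L$ classifying the Maslov data must be null, to conclude that the Maslov class, being the obstruction to trivializing this structure, vanishes. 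It is this stable-homotopy input, rather than any single Floer computation, that makes the unconditional theorem deep.
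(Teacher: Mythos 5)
The first thing to say is that the paper does not prove this statement at all: it appears in the ``Recollections'' section purely as an imported black box, with the proof deferred to the cited works \cite{abouzaid-nearby,abouzaid-kragh}, and it is then used twice in the proof of Lemma~\ref{lemma.double-cylinder-1}. So there is no in-paper proof to compare against, and your attempt can only be judged against the actual literature. Against that standard, your roadmap is essentially accurate. The vanishing-Maslov-class case is Abouzaid's theorem in \cite{abouzaid-nearby}, and its proof runs precisely along your axis (1)--(3): generation of the wrapped Fukaya category of $T^*Q$ by a cotangent fiber, the identification of the fiber's endomorphism $A_\infty$-algebra with $C_{-*}(\Omega_q Q)$ (Abbondandolo--Schwarz, Abouzaid), the study of $CF^*(T^*_qQ,L)$ as a module over this algebra, an Euler-characteristic/degree argument giving intersection number $\pm 1$ with each fiber, and then $\pi_1$-control plus homology with local coefficients feeding into Whitehead's theorem. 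You also correctly identify the removal of the Maslov hypothesis as the hard step, as Kragh's contribution, and as the place where stable homotopy theory enters. At the level of detail a sketch can offer, this is the right architecture.

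Two caveats. First, what you have written is a scaffold, not a proof: each named input (fiber generation, the loop-space identification, the rank-one module computation, the spectrum-level construction) is itself a substantial theorem that you defer to, which is appropriate when reconstructing a cited result but should be acknowledged as such. Second, your description of the mechanism in \cite{abouzaid-kragh} is off in its specifics. Kragh does not build a wrapped Floer spectrum of the fiber as a module over $\Sigma^\infty_+ \Omega Q$; his construction deliberately avoids pseudoholomorphic-curve theory, instead using generating functions and finite-dimensional approximation to produce spectra, and the key object is a \emph{fiberwise} (parametrized over $Q$ itself) refinement of the Viterbo transfer, a map of Thom spectra twisted by a local system manufactured from the Maslov class. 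Invertibility of the resulting parametrized module is what forces the grading twist to be trivial (with an appendix by Abouzaid supplying a needed comparison with finite coefficients), yielding both a homology equivalence and the vanishing of the Maslov class, after which the argument of \cite{abouzaid-nearby} applies. So your ``bundle of spectra classifying the Maslov data must be null'' gestures at the right conclusion but misstates both the base of the module structure and the way the obstruction is killed. One last precision point: the theorem as quoted omits compactness of $Q$, which the cited proofs assume; for noncompact $Q$ the cotangent bundle is subcritical Weinstein and contains no compact exact Lagrangians, so the statement is vacuous there, and your proof (like the cited ones) should be understood as addressing closed $Q$.
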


\section{Proof}
Given a (not necessarily Lagrangian) cobordism $Z$, let $Z^{\op}$ be the same cobordism with ingoing and outgoing boundaries interchanged. We call the composite $Z \circ Z^{\op}$ a {\em double} of $Z$. (The other double is the composition $Z^{\op} \circ Z$.)

\begin{lemma}\label{lemma.double-cylinder-1}
Let $Q$ be simply connected. Then any compact, exact cobordism between two compact exact Lagrangians in $T^*Q$ has a double which is an $h$-cobordism.
\end{lemma}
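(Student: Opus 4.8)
The plan is to take the double $D = W^{\op}\circ W$, a compact cobordism from $L_0$ to itself obtained by gluing $W$ and $W^{\op}$ along $L_1$, and to close it up into a circle. Since $W$ is cylindrical near each end, with primitive equal to $f_{L_0}$ (resp. $f_{L_1}$) and independent of the interval coordinate there, the two boundary copies of $L_0$ in $D$ carry the same Lagrangian and the same primitive; gluing them produces a closed Lagrangian $\hat D \subset T^*(Q\times S^1) = T^*Q \times T^*S^1$, with the collar regions lying over the zero section of the $S^1$-factor so that no period obstructs exactness. I would then apply the Abouzaid--Kragh theorem to $\hat D$, whose base $Q\times S^1$ is closed and smooth, to conclude that the bundle projection $\hat D \to Q\times S^1$ is a homotopy equivalence. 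In particular the glued copy $A\cong L_0$ of the original end sits over $Q\times\{\mathrm{pt}\}$, and since the projection restricted to $A$ is precisely the Abouzaid--Kragh equivalence $L_0\to Q$, the inclusion $A\hookrightarrow \hat D$ is identified up to homotopy with the fibre inclusion $Q\hookrightarrow Q\times S^1$.

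First I would extract $\pi_1$. Writing $\hat D$ as the self-gluing of $D$ along its two boundary copies of $L_0$, van Kampen presents $\pi_1\hat D$ as an HNN extension of $\pi_1 D$ over the two inclusions $\pi_1 L_0 \rightrightarrows \pi_1 D$. Because $Q$, hence $L_0$, is simply connected, the amalgamation is over the trivial group, so $\pi_1\hat D \cong \pi_1 D * \ZZ$. But $\hat D \simeq Q\times S^1$ forces $\pi_1\hat D\cong\ZZ$, and $G*\ZZ\cong\ZZ$ only when $G$ is trivial; hence $\pi_1 D = 1$. Thus $D$ and $L_0$ are both simply connected and the two end inclusions induce isomorphisms on $\pi_1$. (Note that simple connectivity of $Q$ already makes $Q$, $L_0$ and $D$ orientable, so no orientation hypotheses are needed below.)

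Next I would compute homology, writing $n=\dim Q$. The fibre identification above makes $H_*(A)\to H_*(\hat D)$ a split injection onto the $H_*(Q)$-summand, so each end inclusion $(i_\pm)_*\colon H_*(L_0)\to H_*(D)$ is split injective. To see it is onto, I would use the homeomorphism $\hat D/A \cong D/\partial D$: the long exact sequence of $(\hat D,A)$ together with the split injection gives $H_k(D,\partial D)\cong H_{k-1}(Q)$, and Lefschetz duality on the oriented $(n+1)$-manifold $D$ then yields $H^{\,n+1-k}(D)\cong H_{k-1}(Q)$, that is $H^j(D)\cong H_{n-j}(Q)\cong H^j(Q)$ by Poincar\'e duality on $Q$. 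Hence $D$ and $L_0\simeq Q$ have equal Betti numbers in every degree and over every field, so the split injection $(i_-)_*$ is an isomorphism with all field coefficients; for simply connected spaces with finitely generated homology this upgrades to an integral homology isomorphism. By Whitehead's theorem $i_-\colon L_0\to D$ is a homotopy equivalence, and the reflection symmetry of the double carries $i_-$ to $i_+$, so $i_+$ is one as well. Therefore $D$ is an $h$-cobordism.

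The main obstacle is this homological step. Closing up gives direct control only of $\hat D$, and the naive attempt to transfer it to the finite cobordism $D$ — for instance through the infinite cyclic cover of $\hat D$, which is homotopy equivalent to $Q$ — is circular, since it presupposes exactly the inclusion $L_0\hookrightarrow D$ one is trying to analyze. Breaking this circularity is what forces the duality computation of $H_*(D,\partial D)$ above. The remaining points, namely the exactness and smoothness of the glued Lagrangian $\hat D$ and the identification of $A$ with a fibre, I expect to be routine.
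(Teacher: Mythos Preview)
Your overall strategy matches the paper's exactly: form the double $D$, close it up to a compact exact Lagrangian $\hat D\subset T^*(Q\times S^1)$, apply Abouzaid--Kragh, and run van Kampen to get $\pi_1 D=1$ from $\pi_1\hat D\cong\ZZ$. The proof is correct.

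The one substantive difference is the homology step, and here the paper's route is shorter than the one you believed was forced on you. Rather than passing through $\hat D/A\cong D/\partial D$, Lefschetz duality on $D$, and Poincar\'e duality on $Q$ to match Betti numbers, the paper exploits the \emph{projection map} $D\to Q\times I$ directly. The gluing $\hat D = D\cup_{L_0\sqcup L_0}(L_0\times I)$ sits over the gluing $Q\times S^1 = (Q\times I)\cup_{Q\sqcup Q}(Q\times I)$, and three of the four vertical projection arrows are homotopy equivalences by Abouzaid--Kragh (on $L_0$, on $L_0\times I$, and on $\hat D$). Comparing the two Mayer--Vietoris sequences via the Five Lemma then gives that $H_*(D)\to H_*(Q\times I)$ is an isomorphism immediately. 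This sidesteps the duality machinery and the ``equal Betti numbers over every field $\Rightarrow$ integral isomorphism'' step; it also makes the final deduction that each collar inclusion is an equivalence a one-line commutative-square argument. Your approach does work, but the circularity you flagged is not genuine: the projection to the base gives a map out of $D$ that you can control without already knowing anything about $L_0\hookrightarrow D$.
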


\begin{lemma}\label{lemma.double-cylinder-2}
Let $Y_0, Y_1$ be simply connected manifolds. If $Z$ is any smooth cobordism between them such that a double is an $h$-cobordism, then $Z$ is an $h$-cobordism itself.
\end{lemma}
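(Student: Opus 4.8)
The plan is to reduce ``$Z$ is an $h$-cobordism'' to statements about $\pi_1$ and about the relative integral homology of $(Z,Y_0)$ and $(Z,Y_1)$, and then to extract the vanishing of these relative groups from the single hypothesis that a glued double is an $h$-cobordism. Without loss of generality (relabeling $Y_0 \leftrightarrow Y_1$ if necessary), say the double is $D = Z \cup_{Y_1} Z^{\op}$, obtained by gluing $Z$ to a mirror copy along $Y_1$; it is an $h$-cobordism from $Y_0$ to $Y_0$. Since each $Y_i \hookrightarrow Z$ is a homotopy equivalence as soon as $\pi_1 Z = 1$ and $H_*(Z, Y_i; \ZZ) = 0$ (relative Hurewicz, then Whitehead), it suffices to prove these. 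First I would note $Z$ is connected—any component missing $Y_1$ would persist as a component of $D$, contradicting connectivity of the $h$-cobordism $D$—so van Kampen applied to $D = Z \cup_{Y_1} Z^{\op}$ with $\pi_1 Y_1 = 1$ gives $\pi_1 D = \pi_1 Z \ast \pi_1 Z$; as $\pi_1 D = \pi_1 Y_0 = 1$, we conclude $\pi_1 Z = 1$.

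For the homology, fix a field $\mathbb{F}$ and use $\mathbb{F}$-coefficients throughout. Because $D$ is an $h$-cobordism, the composite $H_*(Y_0) \to H_*(Z) \to H_*(D)$ is an isomorphism; in particular $H_*(Y_0) \to H_*(Z)$ is injective and $H_*(Z) \to H_*(D)$ is surjective in every degree. Surjectivity forces the connecting maps in the Mayer--Vietoris sequence of $D = Z \cup_{Y_1} Z^{\op}$ to vanish, so that sequence breaks into short exact sequences
\[
 0 \to H_n(Y_1) \xrightarrow{\phi} H_n(Z) \oplus H_n(Z) \to H_n(D) \to 0,
\]
in which $\phi$ is the diagonal of the inclusion $j \colon H_*(Y_1) \to H_*(Z)$ (both copies carry the same map, as $Z^{\op} = Z$ as a space). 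Injectivity of $\phi$ forces $j$ to be injective, and the sequence yields the count $2\dim H_n(Z) = \dim H_n(Y_0) + \dim H_n(Y_1)$.

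Now both $H_*(Y_0) \hookrightarrow H_*(Z)$ and $H_*(Y_1) \hookrightarrow H_*(Z)$ are injective in every degree, so the long exact sequences of the pairs split into $0 \to H_n(Y_i) \to H_n(Z) \to H_n(Z, Y_i) \to 0$. Combined with the dimension count this gives
\[
 \dim H_n(Z, Y_0) = \tfrac12\bigl(\dim H_n(Y_1) - \dim H_n(Y_0)\bigr),
 \qquad
 \dim H_n(Z, Y_1) = \tfrac12\bigl(\dim H_n(Y_0) - \dim H_n(Y_1)\bigr).
\]
These two nonnegative numbers are negatives of one another, so both vanish: $H_*(Z, Y_0; \mathbb{F}) = 0 = H_*(Z, Y_1; \mathbb{F})$ for every field $\mathbb{F}$. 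As $Z$ is compact, its homology is finitely generated, so running this over $\mathbb{F} = \QQ$ and over all $\mathbb{F} = \mathbb{F}_p$ and invoking the universal coefficient theorem upgrades the conclusion to $H_*(Z, Y_0; \ZZ) = 0 = H_*(Z, Y_1; \ZZ)$. With $\pi_1 Z = 1$ already established, relative Hurewicz and Whitehead complete the argument.

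The main obstacle is the step that produces genuine vanishing rather than a mere symmetry between the two relative groups: a single double supplies only the one Mayer--Vietoris relation, and the key observation is that it bounds $\dim H_n(Z,Y_0)$ and $\dim H_n(Z,Y_1)$ from below by quantities of opposite sign, pinning both to zero. The remaining points—connectivity of $Z$, the vanishing of the Mayer--Vietoris connecting maps, and the integral bootstrap via universal coefficients—are routine bookkeeping.
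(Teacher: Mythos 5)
Your proof is correct, and its skeleton is exactly the paper's: van Kampen applied to the double to get $\pi_1 Z = 1$ from triviality of a free product, Mayer--Vietoris applied to the double to control homology, and relative Hurewicz plus Whitehead to conclude that both collar inclusions are homotopy equivalences. Where you genuinely diverge is the homological endgame. The paper's (very terse) argument is that the Mayer--Vietoris sequence of the double splits at each level via the section induced by a collar end (a homology isomorphism onto the double, since the double is an $h$-cobordism); the unwritten continuation is an integral diagram chase: if $u \in H_n(Z)$ maps to $0$ in $H_n(D)$, then $(u,0)$ lies in $\ker\psi = \mathrm{im}\,\phi$, and since both components of $\phi$ are the same map $j_*$, we get $(u,0)=(j_*x,j_*x)$, forcing $u=0$; thus $H_n(Z)\to H_n(D)$ is an isomorphism over $\ZZ$, and both $H_n(Y_i)\to H_n(Z)$ follow by two-out-of-three, with no finiteness hypotheses anywhere. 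You instead pass to field coefficients and count: the split sequences make $\dim H_n(Z,Y_0)$ and $\dim H_n(Z,Y_1)$ nonnegative numbers that are negatives of one another, hence both zero, and a universal-coefficient bootstrap over $\QQ$ and all $\FF_p$ recovers the integral vanishing. Your route is fully rigorous and arguably more transparent than the paper's two-line sketch (the ``opposite signs'' observation is a nice crux), but it genuinely requires the finite generation supplied by compactness---an abelian group such as $\QQ/\ZZ$ vanishes after tensoring with every field---and the extra universal-coefficients step, neither of which the integral chase needs. Both arguments consume the same two inputs in the same way: surjectivity of $H_*(Z)\to H_*(D)$ (equivalently, vanishing of the connecting maps) and injectivity of the Mayer--Vietoris map; your treatment of connectivity of $Z$ is a detail the paper silently assumes.
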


\begin{proof}[Proof of Theorem.]
By Abouzaid-Kragh, any two compact exact Lagrangians in $T^*Q$ are homotopy equivalent to $Q$, so are simply connected. By the lemmas, any compact exact cobordism between them is an $h$-cobordism.
\end{proof}

\begin{proof}[Proof of Lemma~\ref{lemma.double-cylinder-1}.]
Let $Y_{01} \subset T^*Q \times T^*(0,1)$ be a cobordism from $Y_0$ to $Y_1$, with each $Y_i \subset T^*Q$ compact and exact. Note that the diffeomorphism $\RR \to \RR$ given by $t \mapsto -t$ gives rise to another exact cobordism $Y_{01}^{\op} \subset M \times T^*(0,1)$, from $Y_1$ to $Y_0$.
We prove that the composite cobordism
\[
N = Y_{01} \circ (Y_{01})^{\op}
\]
is an $h$-cobordism.

Note that since $N$ is a cobordism collared by $Y_1$ on both ends, we can glue the $Y_1$ on both ends via the identity to obtain a compact, exact Lagrangian in $T^*(Q \times S^1)$. (We are guaranteed exactness since, by definition of cobordism, the primitive function $f$ realizing $df = \theta_{Y_1}$ must agree along the collars). We call this Lagrangian $\overline N$.

By Abouzaid-Kragh, the projection map $\overline N \to Q \times S^1$ is a homotopy equivalence. Consider the diagram
\[
\renewcommand{\labelstyle}{\textstyle}
\xymatrix@R=0.5pc@C=0.5pc{
 Y_1 \coprod Y_1 \ar[rr] \ar[dd]^{\rotatebox{90}{$\sim$}} \ar[dr]
 && N \ar[dd] \ar[dr]\\
 & Y_1 \times I \ar[rr] \ar[dd]^{\rotatebox{90}{$\sim$}}
 && \overline{N} \ar[dd]^{\rotatebox{90}{$\sim$}} \\
 Q \coprod Q \ar[rr] \ar[dr] 
 && Q \times I \ar[dr] \\
 & Q \times I \ar[rr]
 && Q \times S^1
 }
\]
where the top and bottom faces are pushout squares. (The all vertical arrows are projection maps to the zero section, and the indicated arrows are equivalences by Abouzaid-Kragh.) By excision and the Five Lemma, the remaining vertical arrow induces an isomorphism in homology $H_*(N) \to H_*(Q \times I)$.

Moreover, $N$ has trivial fundamental group: the groupoid version of van Kampen's theorem shows there must be a pushout diagram of groupoids
\[	
\xymatrix{
\Pi(Y_0 \coprod Y_0) \simeq \ast \coprod \ast \ar[r] \ar[d] 
& B\pi_1 N \ar[d]
\\ \Pi(Y_0 \times [-\epsilon,+\epsilon] \simeq \ast \ar[r]
& \Pi(N) \cong B \ZZ
}
\]
whence it follows that $\ZZ \cong \ZZ \ast \pi_1 N$, hence $\pi_1 N \cong 0$.
By the Whitehead and Hurewicz theorems, the projection map thus realizes a homotopy equivalence $N \simeq Q \times I$. Moreover, the inclusion of $Y_1$ on either collar of $N$ fits into a commutative diagram of topological spaces
\[
\xymatrix{
Y_1 \ar[r] \ar[d]^\pi
&N \ar[d]^\pi \\
Q \ar[r] 
& Q \times [a,b].
}
\]
The lefthand vertical arrow is a homotopy equivalence by the Abouzaid-Kragh theorem, as is the righthand vertical arrow from our previous discussion. The bottom horizontal arrow is obviously a homotopy equivalence, so the top horizontal arrow must be as well. This shows that $N$ is in fact an $h$-cobordism from $Y_1$ to itself.
\end{proof}

\begin{proof}[Proof of Lemma~\ref{lemma.double-cylinder-2}.]
Note we have the inclusion $Y_0 \into Y_{01} \circ (Y_{01})^{\op}$. We make use of the pushout diagram
\[
\xymatrix{
Y_0 \ar[r] \ar[d] 
& Y_{01} \ar[d]
\\ Y_{01}^{\op} \ar[r]
& Y_{01} \circ (Y_{01})^{\op}.
}
\]
Knowing that $Y_0$ is simply connected and that $Y_1 \simeq Y_{01}^{\op} \circ Y_{01}$ is simply connected, the Van Kampen theorem shows that $Y_{01}$ is also simply connected. Finally, the Mayer-Vietoris sequence splits at each level by including $Y_1 \to Y_{01}$. Thus the Whitehead and Hurewicz theorems show that $Y_{01}$ is an $h$-cobordism.
\end{proof}


\bibliographystyle{amsalpha}
\bibliography{../biblio}


%
%


\end{document}